\newtheorem{theorem}{Theorem}[section]
\newtheorem{lemma}[theorem]{Lemma}
\newtheorem{proposition}[theorem]{Proposition}
\theoremstyle{definition}
\newtheorem{example}[theorem]{Example}
\theoremstyle{remark}
\numberwithin{equation}{section}
\begin{document}

\title{$2B_{p}$ and $4B_{p}$ are topologically conjugate}

\author{Bingzhe Hou }
\address{Bingzhe Hou, Institute of Mathematics , Jilin University, 130012, Changchun, P.R.China} \email{abellengend@163.com}
\author{Gongfu liao}
\address{Gongfu liao, Department of Mathematics , Jilin Normal University,
136000, Siping, P.R.China} \curraddr{Institute of Mathematics ,
Jilin University, 130012, Changchun, P.R.China}
\email{liaogh@email.jlu.edu.cn}
\author{Yang Cao}
\address{Yang Cao, Institute of Mathematics , Jilin University, 130012, Changchun, P.R.China} \email{caoyang@jlu.edu.cn}

\date{October 22, 2008}
\subjclass[2000]{Primary 37C15; Secondary 47B37}
\keywords{topological conjugacy, weighted backward shift operators,
homeomorphism.}
\thanks{This work was supported by the National Nature
Science Foundation of China (Grant No. 10771084). The first author
is supported by the Youth Foundation of Institute of Mathematics,
Jilin university.}
\begin{abstract}
 Let $\lambda B_{p}$, where $\lambda$ is a nonzero complex number, denote a constant-weighted backward shift
 operators on $l^{p}$ for $1\leq p<\infty$. In this article, we investigate, in
 topologically conjugacy, the complete classification for $\lambda
 B_{p}$.
\end{abstract}
\maketitle

\section{Introduction and preliminaries}

A discrete dynamical system is simply a continuous function $f:
X\rightarrow X$ where $X$ is a complete separable metric space. For
$x\in X$, the orbit of $x$ under $f$ is
$Orb(f,x)=\{x,f(x),f^{2}(x),\ldots\}$ where $f^{n}= f\circ f\circ
\cdots \circ f $ is the $n^{th}$ iterate of $f$ obtained by
composing $f$ with $n$ times. A fundamental but difficult problem is
the classification for dynamical systems in the sense of
"topological conjugacy". If $f: X\rightarrow X$ and $g: Y\rightarrow
Y$ are two continuous mappings, then $f$ is topologically conjugate
to $g$ if there exists a homeomorphism $h: X\rightarrow Y$ such that
$g=h\circ f\circ h^{-1}$, we also say that $h$ conjugates $f$ to
$g$. Notice that "topological conjugacy" is an equivalent relation.
Moreover, it is easy to see that such many properties as, periodic
density, transitivity, mixing $\ldots$, are preserved under
topological conjugacy.

Recall that $f$ is transitive if for any two non-empty open sets
$U,V$ in $X$, there exists an integer $n\geq1$ such that
$f^{n}(U)\cap V\neq \phi$. It is well known that, in a complete
metric space without isolated points, transitivity is equivalent to
the existence of dense orbit (\cite{Silverman}). $f$ is strongly
mixing if for any two non-empty open sets $U,V$ in $X$, there exists
an integer $m\geq1$ such that $f^{n}(U)\cap V\neq \phi$ for every
$n\geq m$. $f$ has sensitive dependence on initial conditions (or
simply $f$ is sensitive)if there is a constant $\delta>0$ such that
for any $x\in X$ and any neighborhood $U$ of $x$, there exists a
point $y\in X$ such that $d(f^{n}(x),f^{n}(y))> \delta$, where $d$
denotes the metric on $X$. Moreover, following Devaney
\cite{Devaney}, $f$ is chaotic if $(a)$ the periodic points for $f$
are dense in $X$, $(b)$ $f$ is transitive, and $(c)$ $f$ has
sensitive dependence on initial conditions. It was shown by Banks
et. al. (\cite{Banks}) that $(a) + (b)$ implies $(c)$ and hence
chaoticity is preserved under topological conjugacy, though
sensitivity is not. For more relative results, we refer to
\cite{Block} and \cite{Devaney}.

We are interested in the dynamical systems induced by continuous
linear operators on Banach spaces. In recent years, there has been
got some improvements at this aspect (Grosse-Erdmann's and Shapiro's
articles \cite{Grosse,Shapiro} are good surveys.).

In operator theory, we have the concept "similarity"; two operators
$A$ and $T$, on Banach spaces $\mathcal {H}$ and $\mathcal {K}$, are
similar if there is a bounded linear transformation $S$ from
$\mathcal {H}$ onto $\mathcal {K}$ with a bounded inverse, such that
$A=S^{-1}TS$. Obviously, it is a stronger relation than topological
conjugacy, i.e., if two operators are similar then they must be
topologically conjugate. Our ultimate aim is to give a complete
classification for continuous linear operators in the topologically
conjugate sense. In the present paper, we restrict our attention to
weighted backward shift operators on $l^{p}$ for $1\leq p<\infty$.
Here $l^{p}$ is the classical Banach space of absolutely $p^{th}$
power summable sequences $x=(x_{1},x_{2},\ldots)$ and we use
$\parallel \cdot
\parallel_{p}$ to represent its norm. Without confusion, we also use $0$ to denote the
zero point of $l^{p}$. Let $\Lambda=\{\lambda_{n}\}_{n=1}^{\infty}$
be a bounded sequence of nonzero complex numbers and let $\pi_{n}$
be the projection from $l^{p}$ to the $n^{th}$ coordinate, i.e.,
$\pi_{n}(x)=x_{n}$ for $n\geq1$. The weighted backward shift
operator $\Lambda B_{p}$ is defined on $l^{p}$ by
$$\pi_{n}\circ \Lambda B_{p}(x)=\lambda_{n}x_{n+1}, \ \ for \  all \ n\geq1,$$
where $x=(x_{1},x_{2},\ldots)$ and $\Lambda$ is called the weight
sequence. For convenience, we use $\lambda B_{p}$ to denote the
constant-weighted backward shift operator on $l^{p}$, i.e., the
associated weight sequence consists of a nonzero constant
$\lambda\in\mathbb{C}$. An excellent introduction to the theory of
such operators and extensive bibliography can be found in the
comprehensive article by Shields \cite{Shields}. In classical
operator theory, we have

\begin{proposition}\label{ppsimilar}

Suppose $1\leq p<\infty$ and $\lambda,\omega\in\mathbb{C}$. Then
$\lambda B_{p}$ and $\omega B_{p}$ are similar if and only if
$|\lambda|=|\omega|$.

\end{proposition}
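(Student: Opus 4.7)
The plan is to handle the two directions separately, with a concrete diagonal intertwiner for the sufficiency and a spectral invariant for the necessity.

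For the sufficiency, assume $|\lambda|=|\omega|$ and write $\lambda/\omega=e^{i\theta}$ for some real $\theta$. I would search for the intertwiner among diagonal operators: let $S:l^{p}\to l^{p}$ be defined by $(Sx)_{n}=s_{n}x_{n}$ with scalars $s_{n}$ to be determined. The intertwining condition $S(\lambda B_{p})=(\omega B_{p})S$ applied to the $n^{\text{th}}$ coordinate yields the recursion $\lambda s_{n}=\omega s_{n+1}$, so $s_{n}=(\lambda/\omega)^{n-1}s_{1}$. With $s_{1}=1$ and $|\lambda/\omega|=1$, the sequence $(s_{n})$ is unimodular, hence $S$ is an isometric isomorphism of $l^{p}$ with $S^{-1}$ the diagonal operator of weights $\overline{s_{n}}$; this gives similarity.

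For the necessity, I would appeal to the fact that similarity preserves the spectrum, and in particular the spectral radius $r(\cdot)$. Since the unweighted shift $B_{p}$ satisfies $\|B_{p}^{n}\|=1$ for all $n$, we have $r(B_{p})=1$, and therefore $r(\lambda B_{p})=|\lambda|$ and $r(\omega B_{p})=|\omega|$. If $S$ is a bounded invertible operator with $\lambda B_{p}=S^{-1}(\omega B_{p})S$, then $(\lambda B_{p})^{n}=S^{-1}(\omega B_{p})^{n}S$, and taking norms, $n^{\text{th}}$ roots, and the limit yields $r(\lambda B_{p})=r(\omega B_{p})$, hence $|\lambda|=|\omega|$.

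No step here looks substantial; the only thing requiring a moment of care is checking that the diagonal $S$ is bounded with bounded inverse, which is exactly where the hypothesis $|\lambda|=|\omega|$ is used (without it, $|s_{n}|$ either blows up or tends to $0$, killing invertibility or boundedness). Because of this, the proof essentially reduces to writing down $S$ and invoking the spectral radius formula; no deeper structure of $l^{p}$ is needed, and the argument is independent of $p\in[1,\infty)$.
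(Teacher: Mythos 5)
Your proof is correct and complete. Note that the paper itself offers no proof of this proposition at all --- it is stated as a known fact from classical operator theory --- so there is nothing to diverge from; your argument is precisely the standard one that fills this gap. Both halves check out: the diagonal operator with $s_{n}=(\lambda/\omega)^{n-1}$ intertwines $\lambda B_{p}$ with $\omega B_{p}$ and is an isometric isomorphism of $l^{p}$ exactly because the $s_{n}$ are unimodular (this is where $|\lambda|=|\omega|$ enters, as you observe); and for necessity, the computation $\|(\lambda B_{p})^{n}\|=|\lambda|^{n}\|B_{p}^{n}\|=|\lambda|^{n}$ gives $r(\lambda B_{p})=|\lambda|$, while the spectral radius formula $r(T)=\lim_{n\to\infty}\|T^{n}\|^{1/n}$, valid on any complex Banach space and hence on every $l^{p}$ with $1\leq p<\infty$, combined with $(\lambda B_{p})^{n}=S^{-1}(\omega B_{p})^{n}S$, forces $r(\lambda B_{p})=r(\omega B_{p})$. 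The one small point worth making explicit is that the paper's standing convention is that the weights $\lambda,\omega$ are nonzero, which is what licenses writing $\lambda/\omega=e^{i\theta}$; with that observed, your argument is airtight and, as you say, independent of $p$.
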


For each $|\lambda|>1$, $\lambda B_{p}$ is chaotic by Rolewicz
\cite{Rolewicz} and Grosse-Erdmann \cite{Grosse-e}. On the other
hand, if $|\lambda|\leq1$, then the orbit of each point in $l^{p}$
under $\lambda B_{p}$ approaches to the single point $0$ and hence
$\lambda B_{p}$ is not chaotic for $|\lambda|\leq1$. Therefore,

\begin{proposition}\label{01}

Suppose $1\leq p<\infty$, $|\lambda|\leq1$ and $|\omega|>1$. Then
$\lambda B_{p}$ and $\omega B_{p}$ are not topologically conjugate.

\end{proposition}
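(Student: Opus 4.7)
The plan is to exhibit a dynamical property that is invariant under topological conjugacy and that separates the two cases. The natural candidate, already foreshadowed in the introduction, is Devaney chaos: by the Banks et al.\ result cited above, chaoticity is preserved under topological conjugacy. Thus it suffices to verify that $\omega B_{p}$ is chaotic while $\lambda B_{p}$ is not, whenever $|\omega|>1\geq|\lambda|$.

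The chaoticity of $\omega B_{p}$ for $|\omega|>1$ is exactly the Rolewicz--Grosse-Erdmann theorem quoted just before the statement, so that half is immediate. For the other half I would show something much stronger: the orbit of every point of $l^{p}$ under $\lambda B_{p}$ converges to $0$ when $|\lambda|\leq 1$. A direct computation gives
\[
(\lambda B_{p})^{n}x \;=\; (\lambda^{n}x_{n+1},\lambda^{n}x_{n+2},\ldots),
\]
so
\[
\|(\lambda B_{p})^{n}x\|_{p}^{p} \;=\; |\lambda|^{np}\sum_{k=n+1}^{\infty}|x_{k}|^{p}.
\]
The factor $|\lambda|^{np}$ is bounded by $1$, and the tail sum $\sum_{k\geq n+1}|x_{k}|^{p}$ tends to $0$ because $x\in l^{p}$. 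Hence $(\lambda B_{p})^{n}x\to 0$ for every $x$.

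From this it follows that $\lambda B_{p}$ cannot be transitive: any dense orbit in the non-isolated, unbounded space $l^{p}$ must, in particular, meet the complement of any ball around $0$ infinitely often, contradicting convergence to $0$. A fortiori $\lambda B_{p}$ is not Devaney chaotic. Combined with the chaoticity of $\omega B_{p}$ and the conjugacy-invariance of chaos, this rules out any topological conjugacy between the two.

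I don't foresee a genuine obstacle here; the only mildly delicate case is $|\lambda|=1$, where the weight does not contract and one must rely on the $l^{p}$ tail going to zero rather than on a geometric factor. Everything else is either a one-line $l^{p}$ estimate or a citation of results assembled in the preliminaries.
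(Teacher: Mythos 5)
Your proposal is correct and follows essentially the same route as the paper: the authors also note that $\omega B_{p}$ is chaotic for $|\omega|>1$ (Rolewicz, Grosse-Erdmann) while for $|\lambda|\leq 1$ every orbit of $\lambda B_{p}$ converges to $0$, so $\lambda B_{p}$ is not chaotic, and chaoticity is a conjugacy invariant. Your explicit computation of $\|(\lambda B_{p})^{n}x\|_{p}$ and the deduction of non-transitivity simply fill in details the paper leaves as remarks.
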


One can see that $2B_{p}$ and $4B_{p}$ have almost the same
dynamical properties but are not similar. Are there topologically
conjugate? Similarly, are $\frac{1}{2}B_{p}$ and $\frac{1}{4}B_{p}$
topologically conjugate, and what about $\frac{1}{2}B_{p}$ and
$B_{p}$? In this article, we'll answer these questions and at the
end we'll research in general case, the weight sequence being not
constant sequence, by examples.

\section{Homeomorphisms on $l^{p}$}

In the present section, we'll consider the homeomorphisms on
$l^{p}$. Since $l^{p}$ is neither compact nor locally compact, the
homeomorphisms on $l^{p}$ would be of some strange properties, for
instance, the image of a bounded set under a homeomorphism may be
unbounded \cite{Tseng}. However, we also have the following
property.

\begin{lemma}\label{homeo}

Let $f: l^{p}\rightarrow l^{q}$ be continuous, where $1\leq
p,q<\infty$. Then for any $x\in l^{p}$, there is a neighborhood $U$
of $x$ such that $f(U)$ is bounded.

\end{lemma}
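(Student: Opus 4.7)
The plan is to invoke the $\varepsilon$-$\delta$ definition of continuity directly; no machinery beyond this is needed. Since $f : l^p \to l^q$ is continuous at the given point $x$, for the fixed choice $\varepsilon = 1$ there exists $\delta > 0$ such that $\|f(y) - f(x)\|_q < 1$ whenever $\|y - x\|_p < \delta$. I would then take $U$ to be the open ball $\{y \in l^p : \|y - x\|_p < \delta\}$, which is a neighborhood of $x$, and observe that by construction $f(U)$ is contained in the ball of radius $1$ about $f(x)$ in $l^q$. A set contained in a ball of finite radius is bounded, which is exactly the conclusion.

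In other words, the statement is nothing more than the standard fact that a continuous map from a metric space to a metric space is locally bounded at every point, applied to the particular pair $(l^p, l^q)$. No specific feature of the $l^p$ spaces enters the argument.

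The reason the lemma is nonetheless worth recording — and presumably the reason the authors isolate it here — is the contrast with global behavior pointed out just before the statement, namely that a homeomorphism of $l^p$ can carry a bounded set to an unbounded one because $l^p$ fails to be locally compact. Local boundedness at a point (this lemma) survives even when one cannot upgrade it to boundedness on arbitrary bounded sets, and it is presumably this local version that will be used in the sequel when constructing or analyzing the conjugating homeomorphism between $2B_p$ and $4B_p$. I therefore do not anticipate any technical obstacle; the main value of the lemma is conceptual, and the proof is essentially a one-line unpacking of continuity.
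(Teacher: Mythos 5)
Your proof is correct and matches the paper's approach: the paper simply remarks that the lemma ``is obvious by the continuity of $f$'' and the metric structure of $l^{p}$, which is exactly the $\varepsilon$-$\delta$ unpacking you carry out (take $\varepsilon=1$, let $U$ be the corresponding $\delta$-ball, so $f(U)$ lies in the unit ball about $f(x)$). Nothing more is needed.
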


\begin{proof}

It is obvious by the continuity of $f$ and the definition of metric
on $l^{p}$.

\end{proof}

Now let's construct some homeomorphisms on $l^{p}$.

For any $p\geq1$ and any $s>0$, define a map $h^{(s)}_{p}$ on
$l^{p}$ as follows,

for any $x=(x_{1},x_{2},\ldots)\in l^{p}$,

$\pi_{n}\circ h^{(s)}_{p}(x)=0$ if $x_{n}=0$ and
$$\pi_{n}\circ h^{(s)}_{p}(x)=\frac{x_{n}}{|x_{n}|} \cdot \sqrt[p]{(\sum_{k=n}^{\infty}|x_{k}|^{p})^{s}-(\sum_{k=n+1}^{\infty}|x_{k}|^{p})^{s}}$$
if $x_{n}\neq0$. Then one can obtain the following result.

\begin{proposition}\label{phomeo}

For any $1\leq p<\infty$ and $s>0$, the map $h^{(s)}_{p}$ is a
homeomorphism from $l^{p}$ onto itself. In fact, the inverse of
$h^{(s)}_{p}$ is $h^{(\frac{1}{s})}_{p}$. Moreover, for any positive
number $\lambda$, $h^{(s)}_{p}(\lambda x)=\lambda^{s}\cdot
h^{(s)}_{p}(x)$for any $x\in l^{p}$.

\end{proposition}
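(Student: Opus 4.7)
The plan is to base everything on a single telescoping identity. Set $T_n(x) := \sum_{k=n}^{\infty} |x_k|^p$, so that $T_n(x) - T_{n+1}(x) = |x_n|^p$ and, by the very definition of $h^{(s)}_p$,
$$|\pi_n \circ h^{(s)}_p(x)|^p = T_n(x)^s - T_{n+1}(x)^s,$$
with the phase of $\pi_n \circ h^{(s)}_p(x)$ equal to that of $x_n$ (the whole coordinate being $0$ when $x_n=0$). Telescoping in $n$ yields
$$\|h^{(s)}_p(x)\|_p^p = T_1(x)^s - \lim_{n\to\infty} T_n(x)^s = \|x\|_p^{ps},$$
which shows that $h^{(s)}_p$ maps $l^p$ into itself and satisfies $\|h^{(s)}_p(x)\|_p = \|x\|_p^s$.

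For the inverse, write $y = h^{(s)}_p(x)$; applying the same telescoping to the tail sums of $|y_k|^p$ gives $T_n(y) = T_n(x)^s$, hence $T_n(y)^{1/s} = T_n(x)$. Substituting this into the defining formula for $h^{(\frac{1}{s})}_p$ produces $|\pi_n \circ h^{(\frac{1}{s})}_p(y)|^p = T_n(x) - T_{n+1}(x) = |x_n|^p$ with the correct phase, so $h^{(\frac{1}{s})}_p \circ h^{(s)}_p = \mathrm{id}$; swapping $s \leftrightarrow 1/s$ gives the other composition. The scaling identity is immediate: for $\lambda>0$, $T_n(\lambda x) = \lambda^p T_n(x)$, so the magnitude acquires a factor $\lambda^s$ and the phase is untouched.

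The main step is continuity. I would prove it by combining coordinatewise convergence with convergence of $p$-norms. Each $T_n$ is continuous on $l^p$, so the magnitude $(T_n(x)^s - T_{n+1}(x)^s)^{1/p}$ is continuous in $x$. The phase factor $x_n/|x_n|$ jumps at $x_n=0$, but at such points the magnitude vanishes (since $T_n(x) = T_{n+1}(x)$) and varies continuously, which forces the $n$-th coordinate of $h^{(s)}_p$ to be jointly continuous in $x$. The norm $\|h^{(s)}_p(x)\|_p = \|x\|_p^s$ is manifestly continuous as well. A standard truncation argument (split off a tail, estimate it via $\|y^{(m)}\|_p \to \|y\|_p$ and coordinatewise convergence on the finite part) shows that in $l^p$ coordinatewise convergence together with convergence of $p$-norms implies norm convergence, and this upgrades the two facts above to continuity of $h^{(s)}_p$. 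The same argument with $s$ replaced by $1/s$ handles the inverse, completing the proof that $h^{(s)}_p$ is a homeomorphism. The principal obstacle is the phase discontinuity at zero coordinates, which is resolved automatically by the telescoping identity forcing the magnitude to vanish there.
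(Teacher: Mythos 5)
Your proof is correct and follows essentially the same route as the paper's: both rest on the tail-sum identity $\sum_{k\geq n}\big|\pi_k\circ h^{(s)}_p(x)\big|^p=\big(\sum_{k\geq n}|x_k|^p\big)^s$, establish continuity by combining coordinatewise control with control of the tails (your truncation argument via $\|h^{(s)}_p(x)\|_p=\|x\|_p^s$ is the same mechanism as the paper's uniformly-small-tails estimate for Cauchy sequences), and conclude via $h^{(s)}_p\circ h^{(1/s)}_p=h^{(1/s)}_p\circ h^{(s)}_p=\mathrm{id}$. If anything, you are more explicit than the paper about the phase discontinuity at zero coordinates and about verifying the inverse identity, both of which the paper leaves implicit.
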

\begin{proof}

Let $\{x^{(m)}\}_{m=1}^{\infty}$ be a Cauchy sequence in $l^{p}$ and
let $y^{(m)}=h^{(s)}_{p}{(x^{(m)})}$ for each $m\geq1$. Write
$x^{(m)}=(x^{(m)}_{1},x^{(m)}_{2},\ldots)$ and
$y^{(m)}=(y^{(m)}_{1},y^{(m)}_{2},\ldots)$, for each $m\geq1$. By
the construction of $h^{(s)}_{p}$, we have
\begin{equation}\label{guji1}
\sum\limits_{n=k}^{\infty}{|y_{n}^{(m)}|^{p}}=(\sum\limits_{n=k}^{\infty}{|x_{n}^{(m)}|^{p}})^{s},
\ \ for \ any \ m, k\in \mathbb{N}.
\end{equation}

So $h^{(s)}_{p}$ is a map from $l^{p}$ to $l^{p}$ and
${\{h^{(s)}_{p}{(x^{(m)})}\}}_{m=1}^{\infty}$ converges by
coordinates, i.e., $\{y_{n}^{(m)}\}_{m=1}^{\infty}$ is a Cauchy
sequence for each $n\in\mathbb{N}$. For any $\epsilon>0$, there is a
positive integer $N_{0}$ such that
$\sum\limits_{n=N_{0}}^{\infty}{|x^{(m)}_{n}|^{p}}<\epsilon$, for
each $m\geq1$. According to  (\ref{guji1}), we have
$$\sum\limits_{n=N_{0}}^{\infty}{|y^{(m)}_{n}|^{p}}<\epsilon^{s},  \ \ \ for \ each \ m\geq1.$$
Consequently, $h^{(s)}_{p}$ is continuous. Notice that both
$h^{(s)}_{p}$ and $h^{(\frac{1}{s})}_{p}$ are continuous, and
$$h^{(s)}_{p}\circ h^{(\frac{1}{s})}_{p}=h^{(\frac{1}{s})}_{p}\circ
h^{(s)}_{p}=id.$$ Therefore, $h^{(s)}_{p}$ is a homeomorphism from
$l^{p}$ onto itself, whose inverse map is $h^{(\frac{1}{s})}_{p}$.
Moreover, it is obvious that for any $x\in l^{p}$,
$$h^{(s)}_{p}(\lambda x)=\lambda^{s}\cdot h^{(s)}_{p}(x).$$

\end{proof}
As is well-known (one can see \cite{Dijk}), for any $1\leq
p,q<\infty$ there is a natural homeomorphism $g_{pq}$ from $l^{p}$
onto $l^{q}$ defined as follows,

for any $x=(x_{1},x_{2},\ldots)\in l^{p}$,

$\pi_{n}\circ g_{pq}(x)=0$ if $x_{n}=0$ and
$$\pi_{n}\circ g_{pq}(x)=\frac{x_{n}}{|x_{n}|} \cdot {|x_{n}|}^{\frac{p}{q}}$$
if $x_{n}\neq0$. And we have

\begin{proposition}\label{pqhomeo}

The inverse of $g_{pq}$ is $g_{qp}$, and for any positive number
$\lambda$, $g_{pq}(\lambda x)=\lambda^{\frac{p}{q}}\cdot g_{pq}(x)$
for any $x\in l^{p}$.

\end{proposition}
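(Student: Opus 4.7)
The plan is to prove Proposition~\ref{pqhomeo} by a direct coordinate-wise calculation, since once one knows from \cite{Dijk} that $g_{pq}$ is a homeomorphism, the content of the proposition is just two purely algebraic identities. As a preliminary observation, the identity $|\pi_{n}\circ g_{pq}(x)|^{q}=|x_{n}|^{p}$ gives $\|g_{pq}(x)\|_{q}^{q}=\|x\|_{p}^{p}$, so $g_{pq}$ certainly sends $l^{p}$ into $l^{q}$; if one wished to avoid quoting \cite{Dijk} entirely, continuity could be established by mimicking the tail-sum argument used in the proof of Proposition~\ref{phomeo}.

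For the inverse relation I would fix $x\in l^{p}$, set $y=g_{pq}(x)$, and compute $g_{qp}(y)$ coordinate by coordinate. At any $n$ with $x_{n}\neq 0$, one has $|y_{n}|=|x_{n}|^{p/q}$ and $y_{n}/|y_{n}|=x_{n}/|x_{n}|$, so
\[
\pi_{n}\circ g_{qp}(y)=\frac{y_{n}}{|y_{n}|}\,|y_{n}|^{q/p}=\frac{x_{n}}{|x_{n}|}\,\bigl(|x_{n}|^{p/q}\bigr)^{q/p}=x_{n},
\]
and at coordinates with $x_{n}=0$ both sides vanish trivially. Swapping the roles of $p$ and $q$ yields $g_{pq}\circ g_{qp}=\mathrm{id}_{l^{q}}$, so $g_{pq}$ and $g_{qp}$ are indeed mutually inverse.

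For the scaling identity I would fix $\lambda>0$ and $x\in l^{p}$. Because $\lambda$ is positive, at any $n$ with $x_{n}\neq 0$ we have $(\lambda x_{n})/|\lambda x_{n}|=x_{n}/|x_{n}|$ and $|\lambda x_{n}|^{p/q}=\lambda^{p/q}|x_{n}|^{p/q}$, so
\[
\pi_{n}\circ g_{pq}(\lambda x)=\frac{x_{n}}{|x_{n}|}\cdot\lambda^{p/q}|x_{n}|^{p/q}=\lambda^{p/q}\cdot\pi_{n}\circ g_{pq}(x),
\]
and the zero coordinates are immediate. There is essentially no real obstacle in this proposition beyond bookkeeping; the only subtlety worth flagging is that positivity of $\lambda$ is exactly what keeps the phase $x_{n}/|x_{n}|$ unchanged under scaling, which is why the identity would fail for general complex $\lambda$ and why the hypothesis $\lambda>0$ appears in the statement.
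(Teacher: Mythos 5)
Your proposal is correct, and in fact it supplies more than the paper does: the paper states Proposition~\ref{pqhomeo} with no proof at all, treating it as an evident consequence of the definition of $g_{pq}$ (and deferring the fact that $g_{pq}$ is a homeomorphism to the cited reference \cite{Dijk}). Your coordinate-wise verification of $g_{qp}\circ g_{pq}=\mathrm{id}$ and of the scaling identity is exactly the routine computation the authors left implicit, and your closing remark correctly identifies the one point of substance: positivity of $\lambda$ is what preserves the phase $x_{n}/|x_{n}|$, and this is precisely the form in which the proposition is invoked in the proof of Theorem~\ref{pq}, where it is applied with the positive scalar $\omega^{q/p}$.
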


\section{Topologically conjugate classes for $\lambda B_{p}$}

In this section we'll give the complete topologically conjugate
classification for $\lambda B_{p}$, which indicates that $2B_{p}$
and $4B_{p}$, $\frac{1}{2}B_{p}$ and $\frac{1}{4}B_{p}$ are
topologically conjugate respectively, but not $\frac{1}{2}B_{p}$ and
$B_{p}$.

For convenience, we define a function $\chi: \mathbb{R}\rightarrow
\mathbb{R}$ as follows
$$\chi(t)=\left\{\begin{array}{cc}
1, \ \ \ &\mbox{if \ $t>1$} \\
0, \ \ \ &\mbox{if \ $t=1$} \\
-1, \ \ \ &\mbox{if \ $t<1$}
\end{array}\right.
$$

\begin{theorem}\label{pp}

Suppose $1\leq p<\infty$ and $\lambda,\omega\in\mathbb{C}$. Then
$\lambda B_{p}$ and $\omega B_{p}$ are topologically conjugate if
and only if $\chi(|\lambda|)=\chi(|\omega|)$.

\end{theorem}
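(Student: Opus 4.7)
The argument splits along the trichotomy defined by $\chi$. For sufficiency I handle the three cases $\chi(|\lambda|) = \chi(|\omega|) \in \{-1, 0, 1\}$ in turn. The case $\chi = 0$, namely $|\lambda| = |\omega| = 1$, is immediate from Proposition \ref{ppsimilar}, since similarity implies topological conjugacy. For $\chi = \pm 1$ I first apply Proposition \ref{ppsimilar} to reduce to positive real weights, because $\lambda B_p$ is similar, and hence conjugate, to $|\lambda| B_p$; it therefore suffices to conjugate $|\lambda| B_p$ with $|\omega| B_p$, both moduli lying strictly on the same side of $1$. Setting $s = \log_{|\lambda|} |\omega| > 0$, I invoke the homeomorphism $h_{p}^{(s)}$ from Proposition \ref{phomeo}. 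A coordinate-wise computation using the shift identity $(|\lambda| B_p(x))_n = |\lambda| x_{n+1}$, the defining formula of $h_p^{(s)}$, and the scaling relation $h_p^{(s)}(|\lambda| x) = |\lambda|^{s} h_p^{(s)}(x)$ should yield $h_p^{(s)} \circ |\lambda| B_p = |\omega| B_p \circ h_p^{(s)}$, since $|\lambda|^s = |\omega|$.

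For necessity, Proposition \ref{01} handles every case in which exactly one of $|\lambda|, |\omega|$ exceeds $1$. The genuinely new case is $|\lambda| = 1$ versus $|\omega| < 1$. Here I plan to exhibit a purely topological invariant separating the two regimes, namely the property $(\star)$: there exists an open set $U \ni 0$ with $\bigcap_{n \geq 0} f^{n}(U) = \{0\}$. Since $0$ is the only fixed point of $\mu B_p$ whenever $|\mu| \leq 1$, any topological conjugacy in this regime must send $0$ to $0$, and the identity $h(f^n(U)) = g^n(h(U))$ shows that $(\star)$ is preserved under such conjugacy.

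To verify the claimed behaviour of $(\star)$: when $|\omega| < 1$ take $U = \{x \in l^p : \|x\|_p < 1\}$, and the estimate $\|(\omega B_p)^n x\|_p \leq |\omega|^n \|x\|_p$ forces $(\omega B_p)^n(U) \subseteq \{x : \|x\|_p < |\omega|^n\}$, so the intersection collapses to $\{0\}$. When $|\lambda| = 1$, however, $(\star)$ fails: every open $U \ni 0$ contains some ball $\{x : \|x\|_p < r\}$, hence contains $c \lambda^{-n} e_{n+1}$ for every $n \geq 0$ and every $|c| < r$ (note $|\lambda^{-n}| = 1$), and applying $(\lambda B_p)^n$ to this point recovers $c e_1$. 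Hence $\{c e_1 : |c| < r\} \subseteq \bigcap_n (\lambda B_p)^n(U)$, preventing the intersection from reducing to $\{0\}$.

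The main obstacle is precisely identifying the invariant that separates $|\lambda| = 1$ from $|\omega| < 1$, because the most natural dynamical features coincide in both regimes: neither system is chaotic, every orbit converges to $0$, and $\{0\}$ is the unique periodic orbit. Property $(\star)$ succeeds because it captures the \emph{uniform} nature of the contraction near the fixed point: for $|\omega| < 1$ an entire neighborhood is attracted uniformly, whereas for $|\lambda| = 1$ the basis-vector orbits $(\lambda B_p)^k e_n = \lambda^k e_{n-k}$ retain norm $1$ for $n-1$ iterates before vanishing, so no open neighborhood of $0$ contracts uniformly to the fixed point.
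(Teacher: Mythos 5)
Your proof is correct, and while its sufficiency half coincides with the paper's (reduce to positive weights via Proposition \ref{ppsimilar}, then conjugate by $h_p^{(s)}$ with $|\lambda|^s=|\omega|$), the necessity half takes a genuinely different route. The paper isolates the same remaining case, $|\lambda|=1$ versus $0<\omega<1$, but argues by contradiction rather than via an invariant: if $f$ conjugates $B_p$ to $\lambda B_p$, Lemma \ref{homeo} gives $\delta,M>0$ with $\|f(y)\|_p<M$ whenever $\|y\|_p\le\delta$; applying the intertwining relation to $y^{(n)}=\delta e_n$, each of which $B_p^{n-1}$ maps to $\delta e_1$, yields $f(\delta e_1)=\lim_{n\to\infty}(\lambda B_p)^{n-1}f(y^{(n)})=0$, and likewise $f(\delta e_1/2)=0$, contradicting injectivity of $f$. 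You instead package the same underlying mechanism (basis vectors retain their norm under a modulus-one shift, while a strict contraction shrinks a whole ball uniformly) into an explicit conjugacy invariant $(\star)$, uniform attraction of the unique fixed point, which you verify for $|\omega|<1$ and refute for $|\lambda|=1$. The paper's version is shorter but leans on the metric fact in Lemma \ref{homeo} together with injectivity; yours avoids Lemma \ref{homeo} entirely, is purely set-theoretic once one observes that a conjugacy must fix $0$, and produces a reusable invariant that explains conceptually why the two regimes differ. Two points worth making explicit in a final write-up: the identity $\bigcap_n h(f^n(U))=h\bigl(\bigcap_n f^n(U)\bigr)$ requires injectivity of $h$, and the computation $h_p^{(s)}\circ|\lambda| B_p=|\omega| B_p\circ h_p^{(s)}$ rests on the (easily checked) fact that $h_p^{(s)}$ commutes with the unweighted shift, which is immediate from the tail-sum form of its definition.
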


\begin{proof}

Because of Proposition \ref{ppsimilar}, it suffices to consider the
case that both $\lambda$ and $\omega$ are positive number. If
$\chi(|\lambda|)=\chi(|\omega|)$, then there is a positive number
$s$ such that $|\lambda|^{s}=|\omega|$. Consider the homeomorphism
$h_{p}^{(s)}$ defined in the previous section. For any
$x=(x_{1},x_{2},\ldots)\in l^{p}$, denote $x'=(x_{2},x_{3},\ldots)$.
Then, by the construction of $h_{p}^{(s)}$ and Proposition
\ref{phomeo}, we have
$$(h_{p}^{(s)}\circ \lambda B_{p})(x)=h_{p}^{(s)}(\lambda x')=\lambda^{s}\cdot h_{p}^{(s)}(x')=\omega\cdot h_{p}^{(s)}(x'),$$
$$(\omega B_{p}\circ h_{p}^{(s)})(x)=\omega\cdot h_{p}^{(s)}(x').$$
Therefore $h_{p}^{(s)}\circ \lambda B_{p}=\omega B_{p}\circ
h_{p}^{(s)}$ and hence $\lambda B_{p}$ and $\omega B_{p}$ are
topologically conjugate.

On the converse, by Proposition \ref{ppsimilar} and \ref{01} it
suffices to verify that $\lambda B_{p}$ and $B_{p}$ are not
topologically conjugate if $\lambda$ is a positive number less than
$1$. Now suppose a homeomorphism $f$ conjugates $B_{p}$ to $\lambda
B_{p}$, where $0<\lambda<1$. It follows from Lemma \ref{homeo} that
there exist $\delta>0$ and $M>0$ such that $\parallel
f(y)\parallel_{p}<M$ whenever $\parallel y
\parallel_{p}\leq \delta$. Write
$$y^{(n)}=(0,0,\ldots,0,\underbrace{\delta}_{n^{th}},0,\ldots).$$
Since $\parallel y^{(n)}\parallel_{p}=\delta$ for each
$n\in\mathbb{N}$, we have $\parallel(\lambda
B_{p})^{n-1}(f(y^{(n)})\parallel_{p}\leq\lambda^{n-1} M\rightarrow
0$ as $n\rightarrow\infty$. Consequently,
$$f(y^{(1)})=\lim\limits_{n\rightarrow\infty}f(B_{p}^{n-1}(y^{(n)}))=
\lim\limits_{n\rightarrow\infty}(\lambda
B_{p})^{n-1}(f(y^{(n)}))=0.$$ Similarly, one can see
$f(\frac{y^{(1)}}{2})=0$, which is a contradiction.

\end{proof}

Now we've answer the question in section $1$ and it is some
surprising that $B_{p}$ and $\frac{1}{2}B_{p}$ are not topologically
conjugate, although the orbit of each point in $l^{p}$ under either
of them approaches to the single point $0$. Furthermore, we can
obtain a more general result.

\begin{theorem}\label{pq}

Suppose $1\leq p,q<\infty$ and $\lambda,\omega\in\mathbb{C}$. Then
$\lambda B_{p}$ and $\omega B_{q}$ are topologically conjugate if
and only if $\chi(|\lambda|)=\chi(|\omega|)$.

\end{theorem}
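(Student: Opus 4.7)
The plan is to reduce Theorem \ref{pq} to Theorem \ref{pp} by using the cross-space homeomorphism $g_{pq}: l^{p}\to l^{q}$ from Proposition \ref{pqhomeo} to bridge between the two spaces. As in Theorem \ref{pp}, I would first invoke Proposition \ref{ppsimilar} on each side to replace $\lambda B_{p}$ and $\omega B_{q}$ by $|\lambda|B_{p}$ and $|\omega|B_{q}$ respectively, reducing to the case of positive real weights.

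For the forward implication, I would choose $s>0$ solving $\lambda^{sp/q}=\omega$ (possible whenever $\chi(\lambda)=\chi(\omega)$: take $s=\frac{q\ln \omega}{p\ln \lambda}$ when $\lambda\neq 1$, and any positive $s$ when $\lambda=\omega=1$) and claim that the composition $H:=g_{pq}\circ h^{(s)}_{p}$ conjugates $\lambda B_{p}$ to $\omega B_{q}$. Both $h^{(s)}_{p}$ and $g_{pq}$ are defined coordinate-wise in a way that intertwines the forward truncation $x\mapsto x'=(x_{2},x_{3},\ldots)$; combined with the homogeneity formulas $h^{(s)}_{p}(\lambda x)=\lambda^{s}h^{(s)}_{p}(x)$ and $g_{pq}(\mu y)=\mu^{p/q}g_{pq}(y)$ from Propositions \ref{phomeo} and \ref{pqhomeo}, the identity $H\circ \lambda B_{p}=\omega B_{q}\circ H$ should then follow from a short calculation chasing scalars.

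For the converse I would argue by contradiction, splitting into two sub-cases based on where the $\chi$-values disagree. If one of $|\lambda|,|\omega|$ exceeds $1$ and the other does not, then the argument of Proposition \ref{01} applies verbatim in the cross-space setting: chaoticity is preserved under topological conjugacy, one side is chaotic by Rolewicz's theorem, and every orbit of the other side converges to $0$. In the remaining situation where one modulus equals $1$ and the other is strictly less than $1$, I would follow the Theorem \ref{pp} strategy: reduce (via Proposition \ref{ppsimilar}) to comparing $B_{q}$ with some $\lambda B_{p}$, $0<\lambda<1$; apply Lemma \ref{homeo} to obtain $\delta,M>0$ with $\|f(y)\|_{p}<M$ whenever $\|y\|_{q}\leq\delta$; and using the contraction factor $\lambda^{n-1}\to 0$ conclude $f(y^{(1)})=f(y^{(1)}/2)=0$, contradicting injectivity of $f$.

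The main obstacle I anticipate is the bookkeeping in the forward direction: the exponents in $h^{(s)}_{p}$ and $g_{pq}$ must compose cleanly so that the scalar produced by $(g_{pq}\circ h^{(s)}_{p})(\lambda x')$ is exactly $\omega$ times $g_{pq}(h^{(s)}_{p}(x'))$, which is why the exponent condition $\lambda^{sp/q}=\omega$ enters. Once the correct $s$ has been pinned down, both directions are essentially repackagings of material already developed in the paper.
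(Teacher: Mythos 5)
Your proposal is correct, and its sufficiency half is essentially the paper's argument: the paper likewise reduces to positive weights via Proposition \ref{ppsimilar}, uses $g_{pq}$ to conjugate $\omega^{q/p}B_{p}$ to $\omega B_{q}$, and then invokes Theorem \ref{pp} (whose proof is exactly the map $h^{(s)}_{p}$) to pass from $\lambda B_{p}$ to $\omega^{q/p}B_{p}$; your map $H=g_{pq}\circ h^{(s)}_{p}$ with $\lambda^{sp/q}=\omega$ is just the explicit composition of these two conjugacies, and your scalar bookkeeping matches the paper's condition $\lambda^{s}=\omega^{q/p}$. Where you genuinely diverge is the necessity. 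The paper gets it almost for free: since $g_{pq}$ conjugates $|\omega|^{q/p}B_{p}$ to $|\omega|B_{q}$ \emph{unconditionally} (no hypothesis on $\chi$ is needed for that step), any conjugacy between $\lambda B_{p}$ and $\omega B_{q}$ transports to one between $\lambda B_{p}$ and $|\omega|^{q/p}B_{p}$ inside the single space $l^{p}$, where the necessity direction of Theorem \ref{pp} applies together with $\chi(|\omega|^{q/p})=\chi(|\omega|)$. You instead re-run the two obstruction arguments directly in the cross-space setting: chaoticity invariance when one modulus exceeds $1$ and the other does not, and the Lemma \ref{homeo} shrinking-orbit contradiction when one modulus equals $1$ and the other is smaller. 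Both routes are valid --- Lemma \ref{homeo} is indeed stated for maps $l^{p}\to l^{q}$, so nothing breaks across spaces --- but the paper's reduction is shorter and avoids repeating work, while yours is more self-contained and makes visible that the obstructions never used that the two shifts act on the same space. One small inaccuracy in your write-up: $h^{(s)}_{p}$ is not ``defined coordinate-wise'' (each output coordinate depends on the whole tail sum $\sum_{k\geq n}|x_{k}|^{p}$); what you actually need, and what does hold, is the intertwining identity $h^{(s)}_{p}(x')=\bigl(h^{(s)}_{p}(x)\bigr)'$, which is precisely the computation carried out in the proof of Theorem \ref{pp}.
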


\begin{proof}

According to Theorem \ref{pp} and Proposition \ref{ppsimilar}, it
suffices to prove that $\lambda B_{p}$ and $\omega B_{q}$ are
topologically conjugate if $\chi(\lambda)=\chi(\omega)$ where
$\lambda$ and $\omega$ are positive numbers. Now suppose $\lambda$
and $\omega$ are positive numbers with $\chi(\lambda)=\chi(\omega)$.
Then $\chi(\omega^{\frac{q}{p}})=\chi(\lambda)=\chi(\omega)$.
Consider the homeomorphism $g_{pq}$ defined in the previous section.
For any $x=(x_{1},x_{2},\ldots)\in l^{p}$, denote
$x'=(x_{2},x_{3},\ldots)$. By the construction of $g_{pq}$ and
Proposition \ref{pqhomeo}, we have
$$g_{pq}(\omega^{\frac{q}{p}}B_{p}(x))=g_{pq}(\omega^{\frac{q}{p}}x')
=\omega \cdot g_{pq}(x'),$$
$$\omega B_{q}(g_{pq}(x))=\omega \cdot g_{pq}(x').$$
Therefore $\omega^{\frac{q}{p}} B_{p}$ and $\omega B_{q}$ are
topologically conjugate. In addition, $\omega^{\frac{q}{p}} B_{p}$
and $\lambda B_{p}$ are topologically conjugate by Theorem \ref{pp},
hence $\lambda B_{p}$ and $\omega B_{q}$ are topologically conjugate
since topological conjugacy is an equivalent relation.

\end{proof}

We've got that there are three topological conjugate classes for
$\lambda B_{p}$ in all. At the end of this paper, we'll consider the
general case that the weight sequence is not constant sequence, and
we would see some new topologically conjugate classes different from
the three classes.

First of all, we give or restate in following Proposition
\ref{charact} the characterizations of several topologically
conjugate invariances such as chaoticity, transitivity and strong
mixing for weighted backward shift operators. Denote $\beta(n)$ as
$$\beta(n)=\prod\limits_{i=1}^{n}{\omega(i)}, \ \ \ for \ \  \ n=1,2,\ldots \ \ ,$$
where $\{\omega_{n}\}_{n=1}^{\infty}$ is a weight sequence.

\begin{proposition}\label{charact}
If $T$ is a weighted backward shift operator on $l^{p}$, $1\leq
p<\infty$, with weight sequence $\{\omega_{n}\}_{n=1}^{\infty}$,
then

$(I)$(K. G. Grosse-Erdmann \cite{Grosse-e}) $T$ is chaotic if and
only if $\sum\limits_{n=1}^{\infty}\frac{1}{|\beta(n)|^{p}}<\infty$;

$(II)$(G. Costakis and M. Sambarino \cite{Cost})  $T$ is strongly
mixing if and only if
$\lim\limits_{n\rightarrow\infty}|\beta(n)|=\infty$.

$(III)$(H. N. Salas \cite{Salas}) $T$ is transitive if and only if
$\limsup\limits_{n\rightarrow\infty}|\beta(n)|=\infty$.
\end{proposition}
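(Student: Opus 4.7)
The plan is to prove each of the three equivalences through a common framework: a formal right inverse $S$ to $T$ on the dense subspace $c_{00}$ of finitely supported sequences, together with the appropriate version of the Kitai / Gethner--Shapiro hypercyclicity criterion. Concretely, I would define $S$ on the standard basis so that $TS = I$ holds; a short computation then expresses $\|S^k e_j\|_p$ as the reciprocal of a tail product of weights, i.e.\ in terms of $|\beta(n)|$, while $T^k y = 0$ eventually for every $y \in c_{00}$.

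For part $(II)$, the hypothesis $|\beta(n)| \to \infty$ forces $\|S^k y\|_p \to 0$ for every $y \in c_{00}$, and the classical Kitai criterion then yields strong mixing. Conversely, if $|\beta(n)|$ does not tend to infinity, I would pick a bounded subsequence $|\beta(n_k)| \le M$ and use it to build a small neighbourhood $U$ of a basis vector and a neighbourhood $V$ of $0$ with $T^{n_k}(U) \cap V = \emptyset$ for all $k$, ruling out mixing. For part $(III)$, the hypothesis is only $\limsup |\beta(n)| = \infty$, so I apply the subsequence version of the criterion along $n_k$ with $|\beta(n_k)| \to \infty$ to obtain a dense orbit, and then invoke the Birkhoff transitivity theorem recalled in the introduction to pass from the dense orbit to topological transitivity. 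The converse follows because $\sup_n |\beta(n)| < \infty$ makes the norms $\|T^n x\|_p$ uniformly bounded on bounded sets, so no orbit can be dense.

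For part $(I)$, the summability $\sum |\beta(n)|^{-p} < \infty$ forces $|\beta(n)| \to \infty$, giving transitivity by the argument above. The additional content is density of periodic points. Given $y \in c_{00}$ and a large integer $N$, I would form the candidate period-$N$ vector
\[
y_N \;=\; \sum_{k=0}^{\infty} S^{kN} y,
\]
whose convergence in $l^p$ is exactly the summability condition, and which satisfies $T^N y_N = y_N$ by telescoping together with $TS = I$. Letting $N$ grow puts periodic points arbitrarily close to every $y \in c_{00}$, hence throughout $l^p$. For the converse, I would assume periodic points are dense, approximate each $y \in c_{00}$ by a nearby periodic point, write out the tail of that periodic point in terms of the weights, and combine the resulting $l^p$ finiteness across a cofinal family of periods to recover $\sum |\beta(n)|^{-p} < \infty$.

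I expect the main technical obstacle to be this last converse in $(I)$: turning the mere existence of dense periodic points into a quantitative summability estimate on $|\beta(n)|^{-p}$ requires patching together information coming from infinitely many different periods, rather than just one. The three forward implications all reduce to a single application of the hypercyclicity criterion with the same right inverse $S$, and the converses of $(II)$ and $(III)$ are essentially boundedness arguments using the explicit formula for the norm on $l^p$.
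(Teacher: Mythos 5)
The paper itself offers no proof of this proposition: it is stated as a compilation of known results, with $(I)$ attributed to Grosse-Erdmann, $(II)$ to Costakis--Sambarino, and $(III)$ to Salas. So your proposal has to be measured against the standard proofs in those references, and your skeleton is indeed the standard one: the right inverse $S$ on $c_{00}$ (with $Se_{n}=e_{n+1}/\omega_{n}$, so $\|S^{k}e_{j}\|_{p}=|\beta(j-1)|/|\beta(j+k-1)|$, where $e_{j}$ is the $j$-th basis vector), combined with the Kitai/Gethner--Shapiro criterion. The forward implications of $(I)$ and $(II)$, and in particular the periodic-point construction $y_{N}=\sum_{k}S^{kN}y$ (which converges precisely because the summands have pairwise disjoint supports), are fine as you sketch them. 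The remaining four half-statements, however, contain genuine gaps, and three of them trace back to one missing ingredient: the weights are bounded \emph{above}, say $|\omega_{n}|\le C$ with $C\ge 1$ (this is forced by boundedness of $T$), but not below; hence a large value of $|\beta(n)|$ controls $|\beta(n-r)|$ via $|\beta(n-r)|\ge|\beta(n)|C^{-r}$, but says nothing about $|\beta(n+r)|$.

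Concretely. \emph{Forward $(III)$:} choosing $n_{k}$ with $|\beta(n_{k})|\to\infty$ does not make the criterion run, because you need $|\beta(n_{k}+j)|\to\infty$ for every \emph{fixed} $j$, and the weights just after position $n_{k}$ may be minuscule. The repair---the real content of Salas's theorem---is to pick $m_{k}$ with $|\beta(m_{k})|\ge kC^{k}$ and run the criterion along $n_{k}=m_{k}-k$; then $|\beta(n_{k}+i)|\ge|\beta(m_{k})|C^{-(k-i)}\ge kC^{i}\to\infty$ for each fixed $i$. \emph{Converse $(III)$:} your boundedness claim is false. Since $\pi_{j}(T^{n}x)=(\beta(j+n-1)/\beta(j-1))x_{j+n}$, the quantity $\sup_{n}|\beta(n)|$ does not bound orbits: with weights consisting of $k$ halves followed by $k$ twos, $k=1,2,\ldots$, one has $\sup_{n}|\beta(n)|=1$ yet $\|T^{k}\|\ge 2^{k}$, and a suitable $x$ even has an unbounded orbit. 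The correct converse (which works for $(II)$ as well) looks only at the first coordinate: if the orbit of $x$ is dense, then for every $M$ there is $n$ with $\|T^{n}x-Me_{1}\|_{p}<1$, whence $|\beta(n)|\,\|x\|_{p}\ge|\beta(n)||x_{n+1}|>M-1$, so $\sup_{n}|\beta(n)|=\infty$. \emph{Converse $(II)$:} your $U$ and $V$ are the wrong way round: if $U$ is a neighbourhood of a basis vector $e_{j}$, then $T^{n_{k}}(U)\ni T^{n_{k}}e_{j}=0\in V$ once $n_{k}\ge j$, so the intersection is never empty. Take instead $U=\{\|x\|_{p}<\epsilon\}$ with $\epsilon<1/(2M)$ and $V$ the ball of radius $1/2$ about $e_{1}$; then $|\pi_{1}(T^{n_{k}}x)|\le M\epsilon<1/2$ on $U$. \emph{Converse $(I)$:} the patching over a cofinal family of periods that you (rightly) flag as the obstacle is unnecessary. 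One periodic point $x$ with $\|x-e_{1}\|_{p}<1/2$ and period $N$ gives $\beta(kN)x_{kN+1}=x_{1}$ for all $k$, hence $\sum_{k}|\beta(kN)|^{-p}\le\|x\|_{p}^{p}/|x_{1}|^{p}<\infty$; the backward comparison $|\beta(kN-r)|\ge|\beta(kN)|C^{-N}$ for $0\le r<N$ then upgrades summability along the single progression $\{kN\}$ to the full series $\sum_{n}|\beta(n)|^{-p}<\infty$. In short: the framework is right, but without the bounded-weights comparability argument the proofs of forward $(III)$ and converse $(I)$ do not close, and the stated converses of $(II)$ and $(III)$ contain steps that are false as written.
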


\begin{example}

Let $T^{(1)}$ be a weighted backward shift operator on $l^{2}$ with
the weight sequence $\{\omega_{n}^{(1)}\}_{n=1}^{\infty}$, where
$$\{\omega_{1}^{(1)}\}=1 \ \ and \ \  \{\omega_{n}^{(1)}\}=\sqrt{\frac{n}{n-1}} \ \
for \ \ n\geq 2.$$ It implies $\beta(n)=\sqrt{n}$, for $ n\geq 1$.
Then we have
$\sum\limits_{n=1}^{\infty}\frac{1}{|\beta(n)|^{2}}=\infty$ and
$|\beta(n)|\rightarrow \infty$ as $n \rightarrow \infty$.
Consequently, the operator $T^{(1)}$ is strongly mixing but not
chaotic. Therefore $T^{(1)}$ is not topologically conjugate to
$\lambda B_{2}$ for every $\lambda\in\mathbb{C}$.

\end{example}

\begin{example}

Let $T^{(2)}$ be a weighted backward shift operator on $l^{2}$ with
the weight sequence $\{\omega_{n}^{(2)}\}_{n=1}^{\infty}$, where
$$(\omega_{1}^{(2)},\omega_{2}^{(2)}, \ldots)=(2,\frac{1}{2},2,2,\frac{1}{2},
\frac{1}{2},\underbrace{2,2,2}_{3},\underbrace{\frac{1}{2},\frac{1}{2},\frac{1}{2}}_{3},
\ldots,\underbrace{2,2,\ldots,2}_{k},
\underbrace{\frac{1}{2},\frac{1}{2},\ldots
,\frac{1}{2},\frac{1}{2}}_{k},\ldots).$$ It is easy to see that
$T^{(2)}$ is transitive but not strongly mixing and hence $T^{(2)}$
is not topologically conjugate to either $\lambda B_{2}$ or
$T^{(1)}$.

\end{example}

\begin{example}

Let $T^{(3)}$ be a weighted backward shift operator on $l^{2}$ with
the weight sequence $\{\omega_{n}^{(3)}\}_{n=1}^{\infty}$, where
$$(\omega_{1}^{(3)},\omega_{2}^{(3)}, \ldots)=(\frac{1}{2},2,\frac{1}{2},
\frac{1}{2},2,2,\underbrace{\frac{1}{2},\frac{1}{2},\frac{1}{2}}_{3},\underbrace{2,2,2}_{3},
\ldots,\underbrace{\frac{1}{2},\frac{1}{2},\ldots
,\frac{1}{2},\frac{1}{2}}_{k},\underbrace{2,2,\ldots,2}_{k},
\ldots).$$ It is easy to see that $T^{(3)}$ is not transitive. Now
consider the point
$$x=(0,1,0,
0,0,\frac{1}{2},\underbrace{0,0,0}_{3},\underbrace{0,0,\frac{1}{2^{2}}}_{3},
\ldots,\underbrace{0,0,\ldots
,0,0}_{k},\underbrace{0,0,\ldots,0,\frac{1}{2^{k-1}}}_{k},
\ldots).$$ We have $\parallel(T^{(3)})^{n}(x)\parallel_{2}\geq1$ for
each $n\geq1$. Consequently, $T^{(3)}$ is not topologically
conjugate to $\lambda B_{2}$ for $|\lambda|\leq1$. Therefore,
$T^{(3)}$ is not topologically conjugate to $\lambda B_{2}$ or
$T^{(1)}$, or $T^{(2)}$.

\end{example}

\end{document}